\newtheorem{teor}{Theorem}
\newtheorem{prop}{Proposition}
\newtheorem{lemma}{Lemma}
\newcommand{\abs}[1]{\left\vert#1\right\vert}         
\begin{document}

\title[Elements pairwise generating the symmetric group]{On the maximal number 
of elements pairwise generating the symmetric group of even degree}

\author{Francesco Fumagalli} 
\address{Dipartimento di Matematica e Informatica 'Ulisse Dini', 
Viale Morgagni 67/A, 50134 Firenze, Italy}
\email{francesco.fumagalli@unifi.it}

\author{Martino Garonzi}
\address{Departamento de Matem\'atica, Universidade de Bras\'ilia, Campus 
Universit\'ario \\ Darcy Ribeiro, Bras\'ilia-DF, 70910-900, Brazil}
\email{mgaronzi@gmail.com}

\author{Attila Mar\'oti}
\address{Alfr\'ed R\'enyi Institute of Mathematics, Re\'altanoda utca 13-15, H-1053, 
Budapest, Hungary}
\email{maroti.attila@renyi.hu}

\thanks{The second author acknowledges the support of Funda\c{c}\~{a}o de Apoio \`a 
Pesquisa do Distrito Federal (FAPDF) - demanda espont\^{a}nea 03/2016, and of 
Conselho Nacional de Desenvolvimento Cient\'ifico e Tecnol\'ogico (CNPq) - Grant 
numbers 302134/2018-2, 422202/2018-5. The work of the third author on the project 
leading to this application has
received funding from the European Research Council (ERC) under the
European Union's Horizon 2020 research and innovation programme
(grant agreement No. 741420). He was also
supported by the National Research, Development and Innovation Office
(NKFIH) Grant No.~K132951, Grant No.~K115799, Grant No.~K138828.}

\date{}

\subjclass[2010]{Primary 20B15, 20B30; Secondary 20B40, 20D60, 05D40}

\keywords{Symmetric group, Lov\'asz Local Lemma, group generation, covering}

\begin{abstract}
Let $G$ be the symmetric group of degree $n$. Let $\omega(G)$ be the maximal size of a subset $S$ of $G$ such that $\langle x,y \rangle = G$ whenever $x,y \in S$ and $x \neq y$ and let $\sigma(G)$ be the minimal size of a family of proper subgroups of $G$ whose union is $G$. We prove that both functions $\sigma(G)$ and $\omega(G)$ are asymptotically equal to $\frac{1}{2} \binom{n}{n/2}$ when $n$ is even. This, together with a result of S. Blackburn, implies that $\sigma(G)/\omega(G)$ tends to $1$ as $n \to \infty$. Moreover, we give a lower bound of $n/5$ on $\omega(G)$ which is independent of the classification of finite simple groups. We also calculate, for large enough $n$, the clique number of the graph defined as follows: the vertices are the elements of $G$ and two vertices $x,y$ are connected by an edge if $\langle x,y \rangle \geq A_n$.
\end{abstract}
\maketitle

\begin{center}
{\it To the memory of Carlo Casolo.}
\end{center}

\section{Introduction}

Let $G$ be a noncyclic finite group. J.H.E.~Cohn in \cite{cohn} defined $\sigma(G)$ to be the 
minimal number $k$ such that $G$ is the union of $k$ proper subgroups of $G$. 
This invariant has been studied by many authors. In particular, M.J.~Tomkinson \cite{tom} 
proved that if $G$ is solvable then $\sigma(G)=q+1$, where $q$ is the smallest order 
of a chief factor of $G$ with more than one complement. 
In the present work we concentrate on the symmetric group $S_n$ of degree $n$. 
In two papers \cite{Maroti,KNS} it was shown that $\sigma(S_n)=2^{n-1}$ for every 
odd integer $n \geq 3$. The determination of $\sigma(S_n)$ when $n$ is even seems a more difficult task. In case $n$ is divisible by $6$, E. Swartz \cite{Swartz} 
managed to give a formula for $\sigma(S_n)$. 
Apart from this case, the value of $\sigma(S_n)$ for $n$ even is only known for $n \leq 14$: see \cite{AAS,KNS,OppenheimSwartz}. 

Let $G$ be a finite group which can be generated by $2$ elements. A subset $S$ of 
$G$ is called a pairwise generating set if every subset of $S$ of size $2$ generates 
$G$. The maximal size of a pairwise generating set for $G$ is denoted by $\omega(G)$. This invariant was first introduced by M.W.~Liebeck and A.~Shalev in \cite{LS96}, 
where a general lower bound for $\omega(G)$ was given for $G$ a nonabelian finite 
simple group: they proved that $\omega(G) \geq c \cdot m(G)$ where $c$ is an absolute 
positive constant and $m(G)$ denotes the minimal index of a proper subgroup of $G$. 
After some initial results in \cite{Maroti}, S.R.~Blackburn \cite{B} proved that 
$\omega(S_n)=2^{n-1}$ provided that $n$ is odd and sufficiently large. 
Later, in her Ph.D thesis, L.~Stringer \cite{Stringer} studied the small odd values of $n$ and showed that $\omega(S_n)=2^{n-1}$ for every odd integer $n$ at least $17$ or belonging to $\{7,11,13\}$. Moreover, she showed that $\omega(S_5) < \sigma(S_5)$ and that $\omega(S_9) < \sigma(S_9)$ (see also \cite{KNS}); the problem of whether $\omega(S_{15})=\sigma(S_{15})$ or not is still open. 

An obvious connection between $\sigma(G)$ and $\omega(G)$ for any noncyclic finite group $G$ is that $\omega(G) \leq 
\sigma(G)$. Indeed, every proper subgroup of $G$ contains at most one element of any 
pairwise generating set for $G$. 

Our first result is the following. 

\begin{teor} \label{asympt}
If $n$ is even then $\sigma(S_n)$ and $\omega(S_n)$ are asymptotically equal to $\frac{1}{2} \binom{n}{n/2}$.
\end{teor}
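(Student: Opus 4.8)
The plan is to prove that $\sigma(S_n)$ and $\omega(S_n)$ are both asymptotic to $\frac{1}{2}\binom{n}{n/2}$ for even $n$ by squeezing them between matching asymptotic bounds. Since we always have $\omega(S_n) \leq \sigma(S_n)$, it suffices to establish a lower bound $\omega(S_n) \gtrsim \frac{1}{2}\binom{n}{n/2}$ and an upper bound $\sigma(S_n) \lesssim \frac{1}{2}\binom{n}{n/2}$, both with the same leading constant.

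Let me think about each bound. For the upper bound on $\sigma(S_n)$, the natural covering family to exhibit consists of the intransitive maximal subgroups $S_k \times S_{n-k}$, that is, the stabilizers of subsets of size $k$. The number of such subgroups fixing a $k$-subset is $\binom{n}{k}$, and this is largest, for even $n$, at $k = n/2$; but a subset $T$ of size $n/2$ and its complement have the same stabilizer, so the number of distinct such maximal subgroups is $\frac{1}{2}\binom{n}{n/2}$. First I would verify that every element of $S_n$ lies in some such subgroup — equivalently, that every permutation stabilizes some subset of size $n/2$ — which fails in general (a single $n$-cycle stabilizes no proper nonempty subset). So the intransitive subgroups of type $(n/2, n/2)$ alone do not cover $S_n$; one must add further subgroups (other intransitive types, and imprimitive or primitive subgroups) to catch the remaining elements. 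The key point is that the number of extra subgroups needed is of smaller order than $\frac{1}{2}\binom{n}{n/2}$, so the leading term is unaffected. This asymptotic upper bound is essentially the content of the construction, and I expect the bookkeeping of which additional maximal subgroups are required — and the verification that their number is $o\!\left(\binom{n}{n/2}\right)$ — to be somewhat delicate but not conceptually hard.

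For the lower bound on $\omega(S_n)$, the plan is to construct a large pairwise generating set, i.e.\ a set $S$ of permutations such that any two of them generate $S_n$. A natural family of candidates is indexed by the $n/2$-subsets of $\{1,\dots,n\}$: for each such subset $T$ one chooses a permutation $x_T$ tailored so that $x_T$ lies in the stabilizer of $T$ (and of no other $n/2$-subset in a controlled way), ensuring that two elements attached to distinct subsets are not simultaneously contained in a common intransitive subgroup. One then needs $\langle x_T, x_{T'}\rangle = S_n$ for $T \neq T'$, which requires ruling out containment in \emph{every} proper maximal subgroup, not just the intransitive ones. The main obstacle is precisely this: guaranteeing that the chosen pairs escape all imprimitive and primitive maximal subgroups as well as the transitive and the ``wrong'' intransitive ones. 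I expect this to be handled probabilistically, choosing the permutations $x_T$ at random subject to the subset-stabilizing constraint and using a Lovász Local Lemma argument — consistent with the keywords of the paper — to show that with positive probability no two of them lie together in a proper subgroup. The count of ``bad'' containment events must be controlled so that the local-lemma hypotheses are met, which is where the hard estimates lie.

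Combining the two directions, the chain
\[
\tfrac{1}{2}\binom{n}{n/2} \,(1-o(1)) \;\leq\; \omega(S_n) \;\leq\; \sigma(S_n) \;\leq\; \tfrac{1}{2}\binom{n}{n/2}\,(1+o(1))
\]
forces both invariants to be asymptotic to $\frac{1}{2}\binom{n}{n/2}$, which is the assertion of the theorem. I anticipate that the genuinely difficult step is the lower bound on $\omega(S_n)$: the upper bound on $\sigma(S_n)$ is a concrete covering construction whose error term can be controlled directly, whereas the matching lower bound demands a careful probabilistic construction that simultaneously defeats all the maximal overgroup types, and it is here that the Lovász Local Lemma and the fine estimates on subgroup-containment probabilities must be deployed.
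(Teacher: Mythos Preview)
Your overall plan---squeeze $\omega(S_n)\le\sigma(S_n)$ between matching asymptotic bounds, prove the upper bound by an explicit cover and the lower bound by a Local Lemma construction---is exactly the paper's strategy. But you have misidentified the family of $\frac{1}{2}\binom{n}{n/2}$ subgroups that makes the argument work: the paper uses the \emph{imprimitive} maximal subgroups with two blocks (stabilizers of partitions $\{T,\Omega\setminus T\}$, isomorphic to $S_{n/2}\wr S_2$), not the intransitive stabilizers $S_{n/2}\times S_{n/2}$ of $n/2$-subsets. Both families have the same cardinality, but they behave very differently, and your choice breaks both halves of the argument.

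For the upper bound, your claim that the ``extra'' subgroups needed beyond the intransitive $(n/2,n/2)$ family are $o\!\left(\binom{n}{n/2}\right)$ is false. The $n$-cycles lie in no intransitive subgroup at all; each $n$-cycle lies in exactly one imprimitive $2$-block subgroup, and a count shows that covering the $(n-1)!$ $n$-cycles already forces at least $\frac{1}{2}\binom{n}{n/2}$ further subgroups, doubling your total. In the paper the imprimitive $2$-block subgroups are the main family precisely because they absorb the $n$-cycles (and all elements whose cycles are all even, or two cycles of length $n/2$); the remaining elements then have a cycle of odd length at most $n/3$ and are covered by the intransitive $S_i\times S_{n-i}$ with $i\le\lfloor n/3\rfloor$, whose number is genuinely $o\!\left(\binom{n}{n/2}\right)$.

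For the lower bound, your $x_T\in\Stab(T)$ has a parity obstruction: a permutation with $k$ cycles has sign $(-1)^{n-k}$, so an odd element of $\Stab(T)$ must have an odd number of cycles, hence at least three (one cycle would be an $n$-cycle, which is transitive). But then $x_T$ stabilizes further proper subsets, and the ``bad'' event that $x_T,x_{T'}$ share a small stabilized set has probability only polynomially small, far too large for the Local Lemma bound $1/(e\cdot 2^{n+1})$. If instead you take the natural $(n/2,n/2)$-cycle on $T$ and $\Omega\setminus T$, then $x_T$ is even and $\langle x_T,x_{T'}\rangle\le A_n$. The paper sidesteps all of this by choosing $g_\Delta$ to be an $n$-cycle inside each imprimitive $2$-block subgroup: such an element is odd, lies in no intransitive subgroup, and the remaining containments (primitive, and imprimitive with $\ge 3$ blocks) have exponentially small probability, which is what the Local Lemma needs.
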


This, together with S. Blackburn's result mentioned above, implies that the quotient $\sigma(S_n)/\omega(S_n)$ tends to $1$ as $n$ tends to infinity, without restrictions on the parity of $n$.


The idea of the proof of Theorem \ref{asympt} is to show that there exists a set of pairwise generating elements of $S_n$, consisting of $n$-cycles, one in each imprimitive maximal subgroup of $S_n$ with two blocks of imprimitivity. This gives the lower bound in the following chain of inequalities.
$$\frac{1}{2} \binom{n}{n/2} \leq \omega(S_n) \leq \sigma(S_n) \leq \frac{1}{2} \binom{n}{n/2} + \sum_{i=1}^{\lfloor n/3 \rfloor} \binom{n}{i}.$$
The upper bound is obtained noting that $S_n$ is covered by the imprimitive maximal subgroups with $2$ blocks and the intransitive maximal subgroups stabilizing sets of size at most $\lfloor n/3 \rfloor$. The result then follows by letting $n \to \infty$. 

Let $\Gamma_n$ be the graph whose vertices are the elements of $S_n$ which are products of exactly three disjoint cycles and there is an edge between two of them if they generate a transitive subgroup of $S_n$. The main combinatorial obstacle to determine $\omega(S_n)$ and/or $\sigma(S_n)$ is to determine the clique number of $\Gamma_n$.

Our proof of Theorem \ref{asympt} makes use of results about maximal primitive 
subgroups of $S_n$ (see Lemma \ref{cs}) that rely on the Classification of Finite Simple Groups (CFSG). 
However \cite{B} also depends on CFSG.

We remark that, apart from some symmetric groups, the only cases in which the precise value of 
$\omega$ is known are for groups of Fitting height at most $2$ \cite{LM}, for certain 
alternating groups \cite{B} and for certain linear groups \cite{BEGHM}.

If we allow the pairs of elements of $S_n$ to generate $S_n$ or $A_n$ (and $n$ is even), then we are able to determine the precise size of certain subsets as in our second theorem. 

\begin{teor} \label{omega2}
If $n$ is a large enough even integer, then the maximal size of a subset $X$ of $S_n$ with the property that $\langle x,y \rangle \geq A_n$ whenever $x,y$ are two distinct elements of $X$ is $\frac{1}{2} \binom{n}{n/2} + 2^{n-2}$ if $n/2$ is even and $2^{n-2}$ if $n/2$ is odd.
\end{teor}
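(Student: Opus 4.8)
The plan is to compute the clique number of the graph $\Delta_n$ on vertex set $S_n$ in which $x$ and $y$ are adjacent precisely when $\langle x,y\rangle\ge A_n$. I would first split an admissible set $X$ by parity, writing $X=X_0\sqcup X_1$ with $X_0=X\cap A_n$ and $X_1=X\setminus A_n$. A quick check records that every pair inside $X_0$ must generate exactly $A_n$, while every pair that meets $X_1$ must generate $S_n$: an odd permutation cannot lie in $A_n$, so for such a pair $\langle x,y\rangle\ge A_n$ forces equality with $S_n$. Thus $X_1$ is an ordinary pairwise $S_n$-generating set of odd permutations and $X_0$ is a pairwise $A_n$-generating set, and the extra term will measure how much of $X_1$ can be glued onto a maximal $X_0$.

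For the upper bound I would exploit that if $x,y$ lie in a common proper subgroup $M$ with $A_n\not\le M$ then they are non-adjacent; hence the assignment $x\mapsto M(x)$ of a suitable maximal overgroup, always chosen intransitive or imprimitive (never $A_n$ itself), is \emph{injective} on $X$. The point is to force these overgroups into the canonical family $\mathcal{M}$ consisting of the $\frac{1}{2}\binom{n}{n/2}$ imprimitive subgroups $S_{n/2}\wr S_2$ with two equal blocks, together with the $2^{n-2}$ intransitive subgroups $S_A\times S_{A^c}$ with $|A|$ odd. An even permutation possessing an odd-length cycle has an invariant set of odd size and so lies in a two-odd-block subgroup; an $n$-cycle lies in an equal-block subgroup; and one checks that, when $n/2$ is even, every remaining permutation (those all of whose cycles have even length) still preserves an equal partition. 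This makes $\mathcal{M}$ a cover of $S_n$ and gives $|X|\le|\mathcal{M}|=\frac{1}{2}\binom{n}{n/2}+2^{n-2}$. Primitive overgroups never interfere, because by Lemma \ref{cs} two elements generating a primitive group of large degree already generate $A_n$ or $S_n$.

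The \textbf{main obstacle} is the fate of the permutations all of whose cycles have even length, and the resulting dichotomy in the parity of $n/2$. When $n/2$ is odd such a permutation has only even-sized invariant sets and preserves no equal partition (whose halves now have odd size $n/2$), so it escapes $\mathcal{M}$ and lives instead inside a block system of size $2$; here I expect to need a separate charging argument showing that these elements cannot enlarge the clique beyond $2^{n-2}$. Dually, one must see that the $n$-cycles of $X_1$ can then no longer be adjoined: an $n$-cycle $c$ and the even element attached to an odd equal partition $\{A,A^c\}$ both sit inside $S_{n/2}\wr S_2$, so they fail to generate $S_n$, which is exactly why the summand $\frac{1}{2}\binom{n}{n/2}$ drops out when $n/2$ is odd.

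Finally, for the lower bound I would realise the two families explicitly: to each two-odd-block partition $P=\{A,A^c\}$ I attach an even permutation $x_P$ of cycle type $(|A|,|A^c|)$ (even since $|A|+|A^c|=n$ with both parts odd), and, when $n/2$ is even, to each equal partition an $n$-cycle swapping its two blocks. Distinct partitions share no invariant set, so the only way a pair can fail to generate $A_n$ or $S_n$ is through a common imprimitive or primitive overgroup. I would then choose the representatives at random within each prescribed cycle type and apply the Lov\'asz Local Lemma: the probability that a fixed pair lies in a common imprimitive subgroup is small, while Lemma \ref{cs} bounds the primitive contribution, so with positive probability all pairs generate the required group. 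The delicate step here is the probability estimate for imprimitive containment against the dependency degree, and the cross-conditions between $X_0$ and $X_1$ when $n/2$ is even are settled by the very parity computation that drives the upper bound.
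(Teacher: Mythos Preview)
Your overall architecture is the same as the paper's: the upper bound comes from a covering of $S_n$ by maximal subgroups not containing $A_n$, and the lower bound is obtained by choosing one element of prescribed cycle type in each member of the cover and applying the Lov\'asz Local Lemma. The lower-bound half of your sketch is essentially the paper's argument.

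The genuine gap is in your treatment of the case $n/2$ odd, and it stems from a false claim. You assert that when $n/2$ is odd, a permutation all of whose cycles have even length ``preserves no equal partition (whose halves now have odd size $n/2$), so it escapes $\mathcal{M}$''. This is not true: such a permutation always lies in some $S_{n/2}\wr S_2$, regardless of the parity of $n/2$. Indeed, within each even cycle $(a_1,a_2,\ldots,a_{2k})$ one may separate the odd- and even-indexed entries, and the cycle swaps these two $k$-subsets; doing this in every cycle produces a $2$-block partition that the permutation preserves (swapping the blocks). So $\mathcal{M}$ \emph{is} a cover in both parities, and no element ``escapes''. Your diagnosis of the obstacle, and the proposed ``separate charging argument'', are therefore aimed at a non-problem.

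The real reason the answer drops to $2^{n-2}$ when $n/2$ is odd is much simpler, and it is a counting issue with your family $\mathcal{M}$ rather than a covering issue. Your $\mathcal{M}$ consists of the $\frac{1}{2}\binom{n}{n/2}$ wreath products together with all $2^{n-2}$ intransitive subgroups $S_A\times S_{A^c}$ with $|A|$ odd. When $n/2$ is odd this second family includes the $\frac{1}{2}\binom{n}{n/2}$ subgroups with $|A|=|A^c|=n/2$, and each of these is contained in the corresponding $S_{n/2}\wr S_2$; so your $\mathcal{M}$ is redundant and its size $\frac{1}{2}\binom{n}{n/2}+2^{n-2}$ overshoots the clique number. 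The paper's cover $\mathscr{M}^{(2)}$ is exactly your $\mathcal{M}$ with the redundant intransitive subgroups of type $S_{n/2}\times S_{n/2}$ removed (equivalently: intransitive $S_a\times S_b$ with $a,b$ odd \emph{and} $a\neq b$, together with all $S_{n/2}\wr S_2$). A one-line count then gives $|\mathscr{M}^{(2)}|=2^{n-2}$ when $n/2$ is odd and $\frac{1}{2}\binom{n}{n/2}+2^{n-2}$ when $n/2$ is even, with no further argument needed. Correspondingly, in the lower bound when $n/2$ is odd the paper still attaches $n$-cycles to the equal partitions (not $(n/2,n/2)$-cycles), and your worry that ``the $n$-cycles of $X_1$ can then no longer be adjoined'' does not arise.

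A minor point: primitive overgroups play no role in the upper bound (the cover already handles everything); Lemma~\ref{cs} is used only inside the Local Lemma estimate for the lower bound.
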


At the heart of the proof of Theorems \ref{asympt} and \ref{omega2} is the Lov\'asz Local Lemma \cite{lov}. In this context, the Local Lemma was first used by S.R. Blackburn \cite{B} and 
elaborated on by L.~Stringer \cite{Stringer} in her Ph.D thesis.


Our last result does not depend on CFSG via a nice theorem of Eberhard and Virchow \cite{EV}. 

\begin{prop}
\label{CFSG}
Both $\omega(S_{n})$ and $\omega(A_{n})$ are at least $(1-o(1))n$.
\end{prop}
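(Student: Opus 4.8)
The plan is to produce, for each of $G=S_n$ and $G=A_n$ with $n$ large, a pairwise generating set of size $(1-o(1))n$ by a probabilistic construction based on the symmetric Lov\'asz Local Lemma \cite{lov}, where the only group-theoretic input is a \emph{CFSG-free} criterion guaranteeing that two suitably chosen elements generate $G$. Concretely, I would fix a prime $p$ with $n/2<p\le n-3$, which exists for all large $n$ by Bertrand's postulate together with the classical bounds on prime gaps, and work inside a single conjugacy class $\mathcal{C}$ of $S_n$ whose cycle type consists of one $p$-cycle together with one or two further cycles covering the remaining $n-p$ points. The number of these extra cycles is chosen so that the elements of $\mathcal{C}$ are even (landing in $A_n$) when we target $A_n$, and odd when we target $S_n$; since $p$ is coprime to the lengths of the remaining cycles (which sum to $n-p<p$), a suitable power of any $x\in\mathcal{C}$ is a genuine $p$-cycle.

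First I would isolate the CFSG-free generation criterion: if $H\le S_n$ is transitive and contains a $p$-cycle with $n/2<p\le n-3$ and $p$ prime, then $H\ge A_n$. The point is that a nontrivial block system would have $b\le n/2<p$ blocks, so the $p$-cycle (of order $p$) must fix every block setwise; but then its support, a single orbit of size $p$, would lie inside one block of size $<p$, a contradiction. Hence $H$ is primitive, and Jordan's classical theorem on primitive groups containing a prime-length cycle fixing at least three points yields $H\ge A_n$ --- all without the Classification. Applying this to $H=\langle x,y\rangle$ with $x,y\in\mathcal{C}$, transitivity alone forces $\langle x,y\rangle\ge A_n$, and the prescribed parity then pins $\langle x,y\rangle$ down to be exactly $A_n$ or $S_n$. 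Alternatively, one may feed the CFSG-free generation estimate of Eberhard and Virchow \cite{EV} into the same scheme in place of this elementary argument.

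With this criterion in hand, for a pair of independent uniform elements $x,y\in\mathcal{C}$ the bad event ``$\langle x,y\rangle\ne G$'' coincides with intransitivity of $\langle x,y\rangle$. Because the $p$-cycle support has size $p>n/2$, any common invariant set must simultaneously be a union of cycles of $x$ and of $y$, and a short computation shows this forces a coincidence of two random supports; the resulting failure probability is $P=o(1/n)$, indeed $O(1/n^2)$ for the parity-adjusting three-cycle types and $O(1/n^3)$ for the two-cycle type. I would then sample $x_1,\dots,x_m$ independently from $\mathcal{C}$ and apply the symmetric Local Lemma to the events $A_{ij}=\{\langle x_i,x_j\rangle\ne G\}$, each of which depends only on $(x_i,x_j)$ and so is mutually independent of the events indexed by pairs disjoint from $\{i,j\}$; the dependency degree is below $2m$, so the Lemma succeeds as soon as $e\,P\,(2m)\le 1$. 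Since $P=o(1/n)$, this holds for $m=(1-o(1))n$ (in fact for $m$ polynomial in $n$), and any outcome avoiding every $A_{ij}$ is automatically a set of $m$ distinct, pairwise generating elements. The main obstacle is exactly the CFSG-free generation step: one must certify generation for \emph{every} pair using only elementary tools (the block argument and Jordan's theorem, or \cite{EV}) and must estimate the intransitivity probability for the chosen cycle type finely enough to beat $1/n$; the Local Lemma bookkeeping is then routine.
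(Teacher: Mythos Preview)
Your approach is correct but genuinely different from the paper's. The paper does not use the Local Lemma here at all: it takes the Eberhard--Virchow estimate $p(n)=1-1/n+\Theta(n^{-2+\epsilon})$ for the probability that two random elements of $S_n$ (resp.\ $A_n$) generate at least $A_n$, derives the analogous density for pairs of odd permutations, and then applies Tur\'an's theorem to the generating graph on $A_n$ (resp.\ on $S_n\setminus A_n$) to force a clique of size $(1-o(1))n$. Your argument instead restricts to a single conjugacy class carrying a $p$-cycle with $n/2<p\le n-3$, uses the block argument plus Jordan's classical theorem to reduce ``$\langle x,y\rangle=G$'' to mere transitivity, bounds the intransitivity probability for two independent samples by $O(1/n^2)$, and then runs the symmetric Local Lemma. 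Two remarks: first, your aside that one may ``feed Eberhard--Virchow into the same scheme'' does not quite work, since their failure probability is $\Theta(1/n)$ and the Local Lemma with dependency degree $\approx 2m$ then only yields $m\le n/(2e)$, not $(1-o(1))n$; second, in the three-cycle case you implicitly need both extra cycle lengths to be at least $2$ (hence $p\le n-4$), otherwise a shared fixed point gives failure probability $\ge 1/n$. With these caveats, your route is more elementary (it avoids the analytic input of \cite{EV} entirely) and in fact proves more: the $O(1/n^2)$ bound lets the Local Lemma produce cliques of size $cn^2$, not merely $(1-o(1))n$. The paper's route, by contrast, is a two-line application of Tur\'an once \cite{EV} is quoted.
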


\section{The local lemma}

Given an event $E$ of a probability space, we denote by $P(E)$ its probability and by $\overline{E}$ its complement. As usual $e$ denotes the base of the natural logarithm. 


The following crucial result can be found in \cite{lov}. The formulation we use is taken from \cite[Corollary 5.1.2]{AS} (the ``symmetric case'').

\begin{teor}[Lov\'asz Local Lemma] \label{local}
Let $A_1,\ldots,A_n$ be events in an arbitrary probability space. Let $(V,E)$ be a directed graph, where $V=\{1,\ldots,n\}$, and assume that, for every $i \in V$, the event $A_i$ is mutually independent of the set of events $A_j$ such that $(i,j) \not \in E$. Let $d$ be the maximum valency of a vertex of the graph $(V,E)$. If for every $i\in V$
$$P(A_i) \leq \frac{1}{e(d+1)}$$
then $P( \bigcap_{i \in V} \overline{A_i}) > 0.$

\end{teor}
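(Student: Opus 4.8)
The final statement to prove is the Lovász Local Lemma itself (Theorem \ref{local}), so my plan is to reproduce the standard probabilistic induction rather than to apply it.

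\begin{proof}[Proof proposal]
The plan is to prove, by induction on the size of a subset $T \subseteq V$, the slightly stronger statement that for every $i \in V$ and every $T$ with $i \notin T$ one has
$$P\Bigl(A_i \,\Big|\, \bigcap_{j \in T} \overline{A_j}\Bigr) \leq \frac{1}{d+1},$$
with the convention that the conditioning event has positive probability throughout (this positivity is itself part of what the induction must maintain). Granting this bound, the conclusion follows by the chain rule: writing $V = \{1,\ldots,n\}$ and expanding the intersection telescopically, one gets
$$P\Bigl(\bigcap_{i=1}^{n} \overline{A_i}\Bigr) = \prod_{i=1}^{n} P\Bigl(\overline{A_i} \,\Big|\, \bigcap_{j<i} \overline{A_j}\Bigr) \geq \prod_{i=1}^{n}\Bigl(1 - \frac{1}{d+1}\Bigr) > 0,$$
which is strictly positive since each factor is positive. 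So everything reduces to establishing the conditional bound.

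For the induction, fix $i$ and $T$ and split $T$ into the neighbors and non-neighbors of $i$ in the dependency graph: let $T_1 = \{\, j \in T : (i,j) \in E \,\}$ and $T_2 = T \setminus T_1$. I would write the conditional probability as a ratio,
$$P\Bigl(A_i \,\Big|\, \bigcap_{j \in T}\overline{A_j}\Bigr) = \frac{P\bigl(A_i \cap \bigcap_{j \in T_1}\overline{A_j} \,\big|\, \bigcap_{k \in T_2}\overline{A_k}\bigr)}{P\bigl(\bigcap_{j \in T_1}\overline{A_j} \,\big|\, \bigcap_{k \in T_2}\overline{A_k}\bigr)},$$
and then bound numerator and denominator separately. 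For the numerator, since $A_i$ is mutually independent of the family of events indexed by non-neighbors, and $T_2$ consists of non-neighbors of $i$, the conditioning on $\bigcap_{k \in T_2}\overline{A_k}$ does not affect $A_i$; hence the numerator is at most $P(A_i) \leq \frac{1}{e(d+1)}$. For the denominator, I would enumerate $T_1 = \{j_1,\ldots,j_r\}$, note that $r \leq d$ because $T_1$ is contained in the out-neighborhood of $i$, and apply the chain rule together with the inductive hypothesis to each factor $P\bigl(\overline{A_{j_s}} \mid \bigcap_{t<s}\overline{A_{j_t}} \cap \bigcap_{k \in T_2}\overline{A_k}\bigr)$, each of which is at least $1 - \frac{1}{d+1}$ since the conditioning set has strictly fewer than $|T|$ indices. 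This yields a denominator at least $\bigl(1 - \frac{1}{d+1}\bigr)^{r} \geq \bigl(1-\frac{1}{d+1}\bigr)^{d} = \bigl(\frac{d}{d+1}\bigr)^{d}$.

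Combining the two bounds gives
$$P\Bigl(A_i \,\Big|\, \bigcap_{j \in T}\overline{A_j}\Bigr) \leq \frac{1}{e(d+1)} \cdot \Bigl(\frac{d+1}{d}\Bigr)^{d},$$
and the proof is completed by the elementary inequality $\bigl(1 + \tfrac{1}{d}\bigr)^{d} < e$, valid for every positive integer $d$ (and with the case $d=0$ handled trivially, since then the events are independent), which forces the right-hand side below $\frac{1}{d+1}$ and closes the induction. The main obstacle I anticipate is the bookkeeping around mutual independence: the hypothesis gives independence of $A_i$ from the \emph{whole set} of non-neighbor events, and I must be careful that this is exactly the property licensing the replacement of the conditioned numerator by the unconditioned $P(A_i)$, rather than mere pairwise independence. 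A secondary subtlety is the base case and the standing assumption that every conditioning event encountered has positive probability; I would fold the positivity into the inductive statement so that the ratio above is always well defined.
\end{proof}
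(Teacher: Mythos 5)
Your proof is correct: it is the standard inductive argument for the symmetric Local Lemma, proving by induction on $\abs{T}$ that $P(A_i \mid \bigcap_{j \in T}\overline{A_j}) \leq \frac{1}{d+1}$ (together with positivity of the conditioning events), splitting $T$ into neighbors and non-neighbors, bounding the numerator via mutual independence and the denominator via the chain rule, and closing with $(1+\frac{1}{d})^d < e$. Note that the paper itself offers no proof of this statement --- it quotes the result from Erd\H{o}s--Lov\'asz and takes the formulation from \cite[Corollary 5.1.2]{AS} --- so there is no in-paper argument to compare against; your write-up essentially reproduces the proof in that cited reference, and your attention to the two standard subtleties (that mutual independence of $A_i$ from the \emph{whole family} of non-neighbor events, not mere pairwise independence, is what licenses replacing the conditioned numerator by $P(A_i)$, and that positivity of all conditioning events must be folded into the induction, with the case $T_1 = \varnothing$ and the degenerate case $d=0$ handled separately) is exactly right.
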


The mutual independence condition mentioned in the Lov\'asz Local Lemma means the following:
$$P(A_i\ |\ \bigcap_{j \in S} \overline{A_j}) = P(A_i) \hspace{.5cm} \forall i \in V, \hspace{.5cm} \forall S \subseteq \{j \in V\ :\ (i,j) \not \in E\}.$$


\section{Proof of Theorems \ref{asympt} and \ref{omega2}}

From now on let $n$ be a large even integer. Let also $G:=S_n$ be the symmetric group on $n$ letters. Let $A_n$ denote the alternating 
group on $n$ letters. We will prove both theorems using the same argument.

Let $\mathscr{M}^{(1)}$ be the family of maximal imprimitive subgroups of $G$ with $2$ blocks and let $\Pi^{(1)}$ be the set of $n$-cycles in $G$. Let $\mathscr{M}^{(2)}$ be the family of maximal subgroups of $G$ that are either imprimitive with $2$ blocks or intransitive of type $S_a \times S_b$ with $a$ and $b$ odd, $a \neq b$, $a+b=n$, and let $\Pi^{(2)}$ be the set of elements of $G$ that are either $n$-cycles or elements of cycle type $(a,b)$ with $a$ and $b$ odd, $a \neq b$, $a+b=n$.

Note that 
$$|\mathscr{M}^{(1)}| = \frac{1}{2} \binom{n}{n/2}, \hspace{1cm} |\mathscr{M}^{(2)}| = \left\{ \begin{array}{ll} \frac{1}{2} \binom{n}{n/2}+2^{n-2} & \mbox{if } n/2 \mbox{ is even,} \\
2^{n-2} & \mbox{if } n/2 \mbox{ is odd.} \end{array} \right.$$

Moreover $\mathscr{M}^{(2)}$ is a covering of $G$, meaning that $\bigcup_{M \in \mathscr{M}^{(2)}} M = G$. To see this, note that the elements whose cycle structure consists of cycles all of even length or of exactly two cycles of equal length are covered by the imprimitive maximal subgroups with $2$ blocks, while the elements that admit in the cycle decomposition a cycle of odd length less than $n/2$ are covered by some intransitive subgroup of type $S_a \times S_b$ where $a$, $b$ are odd and $a \neq b$.

Let $\mathscr{S}^{(1)}$ be the set of subsets of $\Omega=\{1,\ldots,n\}$ of size $n/2$ and containing $1$. Let $\mathscr{S}^{(2)}$ be the union of
$\mathscr{S}^{(1)}$ with the set of subsets of $\Omega$ of
odd size less than $n/2$. There is a natural bijection $\mathscr{S}^{(i)} \to \mathscr{M}^{(i)}$, $\Delta \mapsto M_{\Delta}$ for $i=1,2$. Specifically, if $|\Delta|=n/2$ then $M_{\Delta}$ is the stabilizer of the partition $\Delta \cup (\Omega-\Delta)$, and if $|\Delta|<n/2$ then $M_{\Delta}$ is the setwise stabilizer of $\Delta$.

Define two graphs $Q^{(1)}$, $Q^{(2)}$, which both have $G$ as set of vertices. There is an edge between $x$ and $y$ in $Q^{(1)}$ if $\langle x,y \rangle = G$, and there is an edge between $x$ and $y$ in $Q^{(2)}$ if $\langle x,y \rangle \geq A_n$. Note that if 
$\{x,y\}$ is an edge of $Q^{(i)}$, then $x$ and $y$ do not belong to the same member of $\mathscr{M}^{(i)}$. Since $\mathscr{M}^{(2)}$ is a covering of $G$, this proves that the clique number of $Q^{(2)}$ is at most $|\mathscr{M}^{(2)}|$. The third author observed in \cite{Maroti} that $\sigma(G) \leq |\mathscr{M}^{(1)}|+\sum_{i=1}^q \binom{n}{i}$, where $q := \lfloor n/3 \rfloor$, and this upper bound is asymptotically equal to $|\mathscr{M}^{(1)}|$.

We are left to prove that $|\mathscr{M}^{(i)}| \leq \omega(Q^{(i)})$ for $i=1,2$, where $\omega(Q^{(i)})$ denotes the clique number of $Q^{(i)}$, that is, the maximal number of vertices in a complete subgraph of $Q^{(i)}$.

For every $i \in \{1,2\}$ and for every $\Delta \in \mathscr{S}^{(i)}$ let
$$C(\Delta) := M_{\Delta} \cap \Pi^{(i)}.$$
Choose, uniformly and independently, an element $g_{\Delta}$ in each $C(\Delta)$, $\Delta \in \mathscr{S}^{(i)}$.

Note that the sets $C(\Delta)$ are pairwise disjoint. If $\Delta \in \mathscr{S}^{(1)}$ then a simple counting argument shows that $|C(\Delta)|= (2/n) (n/2)!^2$. If $\Delta \in \mathscr{S}^{(2)}-\mathscr{S}^{(1)}$ then $|C(\Delta)|=(|\Delta|-1)! (n-|\Delta|-1)!$. In particular, we always have 
\begin{equation}
\label{ll1}
|C(\Delta)| \geq (2/n)^2 (n/2)!^2.
\end{equation}

We define a graph $\Gamma^{(i)}$ for $i=1,2$. The vertices are the two element subsets of $\mathscr{S}^{(i)}$. Two distinct vertices $v$, $v'$ are connected by an edge if and only if $v \cap v' \neq \varnothing$. The valency of every vertex of $\Gamma^{(i)}$ is $2(|\mathscr{S}^{(i)}|-2) \leq 2^{n+1}$. For every vertex $v=\{\Delta_1,\Delta_2\}$ of $\Gamma^{(i)}$ define $E_v$ to be the event ``$\langle g_{\Delta_1},g_{\Delta_2} \rangle \neq G$'' if $i=1$, and ``$\langle g_{\Delta_1},g_{\Delta_2} \rangle \not \geq A_n$'' if $i=2$.

We will apply Theorem \ref{local} in the case of the graph $\Gamma^{(i)}$ defined above. 

Given a vertex $v$ of $\Gamma^{(i)}$, let $A$ be the set of vertices $w$ of $\Gamma^{(i)}$ with the property that $v \cap w = \varnothing$. The condition that $E_{v}$ is independent of the set of events 
$\{ E_{w} \}_{w \in A}$, mentioned in Theorem \ref{local}, means that 
$$P\Big(E_{v} \cap \bigcap_{w \in A'} \overline{E_{w}}\Big) = 
P(E_{v}) \cdot P\Big(\bigcap_{w \in A'} \overline{E_{w}}\Big),$$
for every subset $A'$  of $A$. But this is clear since $v \cap (\bigcup_{w \in A'} w) = \varnothing$ by the definition of $\Gamma^{(i)}$ and so the choices of $g_{\Delta}$ with $\Delta \in v$ are independent of the choices of $g_{\Delta}$ with $\Delta \in \cup_{w \in A'} w$.

The conclusion of Theorem \ref{local}  
is that there exists a set 
$S \subseteq G$ containing precisely one 
element $g_{\Delta}$ in every $C(\Delta)$, 
for every 
$\Delta \in \mathscr{S}^{(i)}$, 
such that 
$\langle g_{\Delta_1},g_{\Delta_2} \rangle 
= G$ for $i=1$ and $\langle g_{\Delta_1},g_{\Delta_2} \rangle 
\geq A_n$ for $i=2$, for every $g_{\Delta_1} \neq 
g_{\Delta_2}$ in $S$. 
This would imply that $$\omega(Q^{(i)}) \geq |S| 
= |\mathscr{S}^{(i)}| = |\mathscr{M}^{(i)}|$$for $i \in \{1,2\}$, which is what we need.
Note that the fact that $|S| = |\mathscr{S}^{(i)}|$ follows from the fact that the sets $C(\Delta)$ are pairwise disjoint.

We will repeatedly use Stirling's inequalities. 

\begin{lemma}
\label{Stirling}
For every positive integer $m$ we have
$$\sqrt{2 \pi m} (m/e)^m \leq m! \leq e \sqrt{m} (m/e)^m.$$
\end{lemma}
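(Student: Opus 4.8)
The plan is to study the sequence
$$a_m := \frac{m!}{\sqrt{m}\,(m/e)^m},$$
and to show that the asserted inequalities amount to $\sqrt{2\pi} \leq a_m \leq e$ for every $m \geq 1$. Indeed, $a_m \leq e$ is equivalent to $m! \leq e\sqrt{m}\,(m/e)^m$, and $\sqrt{2\pi} \leq a_m$ is equivalent to $\sqrt{2\pi m}\,(m/e)^m \leq m!$, since $\sqrt{2\pi m} = \sqrt{2\pi}\,\sqrt{m}$. So it suffices to prove that $(a_m)_{m \geq 1}$ is monotone decreasing with maximal value $a_1 = 1!/(1 \cdot (1/e)) = e$ and limit $\sqrt{2\pi}$.

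\textbf{Monotonicity.} First I would compute the consecutive ratio. A direct simplification gives
$$\frac{a_m}{a_{m+1}} = \frac{1}{e}\left(\frac{m+1}{m}\right)^{m+\frac12},$$
so that
$$\ln\frac{a_m}{a_{m+1}} = -1 + \left(m+\tfrac12\right)\ln\!\left(1+\frac1m\right).$$
Writing $t = 1/(2m+1)$, one has $(m+1)/m = (1+t)/(1-t)$, and the standard series $\ln\frac{1+t}{1-t} = 2\sum_{k\geq 0} t^{2k+1}/(2k+1)$ yields
$$\ln\frac{a_m}{a_{m+1}} = \frac{t^2}{3} + \frac{t^4}{5} + \cdots > 0.$$
Hence $(a_m)$ is strictly decreasing, which immediately gives $a_m \leq a_1 = e$ for all $m$ and thus the upper bound. (As a byproduct, summing the geometric majorant $\sum_{k\geq 1} t^{2k} = t^2/(1-t^2)$ shows this log-difference is bounded by $\tfrac{1}{12m(m+1)}$, so the series telescopes and $(a_m)$ converges to a positive limit $L$.)

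\textbf{The limit.} Since $(a_m)$ is decreasing to $L$, we have $a_m \geq L$ for all $m$, so the lower bound reduces to the single identification $L = \sqrt{2\pi}$. This is the one genuinely nonelementary step: it is the classical evaluation obtained from the Wallis product, e.g.\ by applying the already-established asymptotics $m! \sim L\sqrt{m}(m/e)^m$ to the central binomial coefficient $\binom{2m}{m}$ and comparing with Wallis's formula $\lim_{m\to\infty} \frac{1}{\sqrt{m}}\binom{2m}{m}4^{-m} = 1/\sqrt{\pi}$, which forces $L = \sqrt{2\pi}$. With $L = \sqrt{2\pi}$ in hand, monotonicity gives $a_m \geq \sqrt{2\pi}$ and hence the lower bound. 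I expect the Wallis-product identification of the constant to be the main obstacle, whereas the ratio computation and the resulting monotonicity are routine; since these are completely standard facts, one may alternatively simply cite them from a standard reference.
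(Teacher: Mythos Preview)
Your argument is correct: the monotonicity of $a_m = m!/(\sqrt{m}\,(m/e)^m)$ via the series expansion of the log-ratio, the initial value $a_1 = e$, and the Wallis-product identification of the limit as $\sqrt{2\pi}$ together constitute the standard textbook proof of these inequalities.

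The paper, however, gives no proof of this lemma at all; it is stated without argument as a well-known tool and used immediately afterward. So there is no ``paper's proof'' to compare against---you are supplying a complete proof where the authors simply cite a classical fact. Your closing remark that one may alternatively just cite a standard reference is exactly what the paper does.
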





For every $H \leq G$ and $\Delta \in \mathscr{S}^{(i)}$ define
$$f_{\Delta}(H) := \frac{|C(\Delta) \cap H|}{|C(\Delta)|}.$$
Given $d,m>1$ such that $n=dm$, denote by 
$\mathcal{W}_{d,m}$ the class
of imprimitive 
maximal subgroups of $G$ isomorphic to 
$S_d \wr S_m$, that is, stabilizers of partitions of $\{1,\ldots,n\}$ consisting of $m$ blocks of size $d$ each.

\begin{lemma}\label{l:imprimitive_1}
Let $\Delta \in \mathscr{S}^{(i)}$ and $W\in\mathcal{W}_{d,m}$.
\begin{enumerate}
\item Assume $m=3$. If $\Delta \in \mathscr{S}^{(1)}$ then $f_{\Delta}(W) \neq 0$ if and only if the intersection of $\Delta$ with each block of $W$ has size $n/6$, in which case
$$f_{\Delta}(W) \leq \frac{(n/6)!^{6}}{(n/2)!^2}\cdot n^{O(1)} \leq (1/3)^n \cdot n^{O(1)}.$$
If $\Delta \in \mathscr{S}^{(2)}-\mathscr{S}^{(1)}$ then $f_{\Delta}(W) \neq 0$ only if $3$ divides $a=|\Delta|$ and the elements of $C(\Delta)$ permute transitively the $3$ blocks of $W$, moreover in this case setting $b=n-a$, we have
$$f_{\Delta}(W) \leq \frac{(a/3)!^3 \cdot (b/3)!^3}{a! \cdot b!} \cdot n^{O(1)} \leq (1/3)^n \cdot n^{O(1)}.$$
\item Assume $m=4$. If $\Delta \in \mathscr{S}^{(1)}$ then $f_{\Delta}(W) \neq 0$ if and only if $\Delta$ is a union of $2$ blocks of $W$, in which case
$$f_{\Delta}(W) \leq \frac{(n/4)!^{4}}{(n/2)!^2}\cdot n^{O(1)} \leq (1/2)^n \cdot n^{O(1)}.$$
If $\Delta \in \mathscr{S}^{(2)}-\mathscr{S}^{(1)}$ then $f_{\Delta}(W) \neq 0$ only if $a=|\Delta|=n/4$ and $\Delta$ is a block of $W$, moreover in this case setting $b=n-a=3n/4$, we have
$$f_{\Delta}(W) \leq \frac{(b/3)!^3}{b!} \cdot n^{O(1)} \leq (1/3)^{3n/4} \cdot n^{O(1)}.$$
Note that $(1/3)^{3/4} < 1/2$.
\end{enumerate}
\end{lemma}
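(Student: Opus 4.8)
The plan is to read $f_{\Delta}(W)$ as the probability that a uniformly random element $g_{\Delta}$ of $C(\Delta)$ lies in $W$, to bound the numerator $|C(\Delta)\cap W|$ by a direct counting argument, and then to divide by the already-computed value of $|C(\Delta)|$ (namely $(2/n)(n/2)!^2$ when $\Delta\in\mathscr{S}^{(1)}$ and $(a-1)!(b-1)!$ when $a=|\Delta|<n/2$). Every case rests on one elementary structural observation, which I would establish first: if $\sigma$ is a single cycle preserving a block system $\mathcal{B}$ and $T$ is the set of blocks meeting the support of $\sigma$, then $T$ is invariant under the induced block permutation $\bar\sigma$, the restriction $\bar\sigma|_T$ is a single $|T|$-cycle, and all the intersections of the support with the blocks of $T$ have the same size, equal to (length of $\sigma$)$/|T|$; in particular $|T|$ divides the length of $\sigma$.

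For $\Delta\in\mathscr{S}^{(1)}$ the relevant elements of $C(\Delta)$ are $n$-cycles in $M_{\Delta}$, so they swap the two halves $\Delta$ and $\Omega-\Delta$, and membership in $W\in\mathcal{W}_{d,m}$ forces $\bar\sigma$ to be an $m$-cycle. Tracking the action of $\sigma$ on the $2m$ pieces $\Delta\cap B_j$ and $(\Omega-\Delta)\cap B_j$, I would show for $m=3$ that all six pieces lie in a single $\sigma$-orbit, hence all have size $n/6$ — exactly the stated nonvanishing criterion — and that the number of such $n$-cycles is at most the number of ways to choose the six bijections between consecutive pieces, i.e. $((n/6)!)^6$ up to a bounded factor. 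For $m=4$ the same bookkeeping shows the eight pieces split into \emph{two} $\sigma$-orbits of four pieces each; the crucial point is that if both orbits were nonempty then $\sigma$ would preserve two proper nonempty subsets and could not be an $n$-cycle, so one orbit must be empty, forcing $\Delta$ to be a union of two blocks, with at most $((n/4)!)^4$ admissible cycles.

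For $\Delta\in\mathscr{S}^{(2)}-\mathscr{S}^{(1)}$ the elements of $C(\Delta)$ are products of an $a$-cycle on $\Delta$ and a $b$-cycle on $\Omega-\Delta$ with $a,b$ odd and $a\neq b$. Applying the structural observation to each cycle, the number $s_a$ (resp.\ $s_b$) of blocks met by the $a$-cycle (resp.\ $b$-cycle) divides $a$ (resp.\ $b$) and is therefore odd; since $b>n/2$ the $b$-cycle cannot sit inside a single block, and a short case analysis in $S_3$ or $S_4$ on the possible cycle types of $\bar\sigma$ (using that the two single cycles of $\bar\sigma$ coming from the two parts must together cover all blocks) pins down $\bar\sigma$ exactly. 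For $m=3$ this forces $3\mid a$, $s_a=s_b=3$ and $\bar\sigma$ a $3$-cycle, so the blocks are permuted transitively; for $m=4$ it forces $a=n/4$ with $\Delta$ a single block and $\bar\sigma$ a $3$-cycle on the other three blocks. The count of admissible cycles is then at most $((a/3)!)^3((b/3)!)^3$ for $m=3$ and $(a-1)!\,((b/3)!)^3$ for $m=4$, and dividing by $|C(\Delta)|$ gives the stated ratios.

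The final inequalities $\leq(1/3)^n n^{O(1)}$, $\leq(1/2)^n n^{O(1)}$ and $\leq(1/3)^{3n/4}n^{O(1)}$ follow by feeding the factorial ratios into Stirling's estimates from Lemma \ref{Stirling}; for instance the exponential part of $(n/6)!^6/(n/2)!^2$ is $((n/6)^n)/((n/2)^n)=(1/3)^n$, the subexponential factors being absorbed into $n^{O(1)}$. I expect the genuinely delicate step to be establishing the \emph{necessity} of the nonvanishing conditions: the single-$n$-cycle obstruction that upgrades the naive size constraint to ``$\Delta$ is a union of two blocks'' in the $m=4$, $\mathscr{S}^{(1)}$ case, and the parity-plus-divisibility analysis that eliminates all block-permutation types except the asserted one in the $\mathscr{S}^{(2)}$ cases. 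Once these are in place, the counting upper bounds and the Stirling estimates are routine.
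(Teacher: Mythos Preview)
Your proposal is correct and follows the same approach as the paper: bound $|C(\Delta)\cap W|$ and $|C(\Delta)|$ separately and then apply Stirling (the paper packages the latter as the single estimate $(x/k)!^k/x!\leq (1/k)^x\cdot e^k\cdot x^{k/2}$). The paper's proof is terse and leaves the structural claims (why $\Delta$ must meet each block in size $n/6$ when $m=3$, why $\Delta$ must be a union of two blocks when $m=4$, etc.) to the reader, whereas you spell these out via the single-cycle/block-system observation and the parity analysis of $s_a,s_b$; this is exactly the content the paper is suppressing, and your argument for it is sound.
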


\begin{proof}
The first inequality in each statement follows from bounding $|C(\Delta) \cap H|$ and $|C(\Delta)|$ separately, recalling that the members of $\mathscr{S}^{(2)}-\mathscr{S}^{(1)}$ have odd size. Let $k$ be any constant positive integer and $x$ a positive integer divisible by $k$. Using Lemma \ref{Stirling} we deduce that
\begin{align*}
\frac{(x/k)!^k}{x!} & \leq \frac{e^k x^{k/2} (x/(ke))^x}{(x/e)^x} = (1/k)^x \cdot e^k \cdot x^{k/2}. 
\end{align*}
The second inequality in each statement of the lemma follows from this observation. This concludes the proof.
\end{proof}

\begin{lemma}[Lemma 4 of \cite{B}] \label{mdelta}
Let $n$ be a positive integer. Let $M$ be a fixed subgroup of $G$. Let $g$ be a fixed element of $G$, and suppose that $g$ is an $n$-cycle, or that $g$ is an $(s,n-s)$-cycle for some integer $s$ such that $1 \leq s \leq n/2$. Then $g$ is contained in at most $n^2$ conjugates of $M$ in $G$.
\end{lemma}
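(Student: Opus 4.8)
The plan is to count the conjugates of $M$ containing $g$ directly, via the transitive conjugation action, and thereby reduce everything to a bound on the order of the centralizer $C_G(g)$. First I would fix the convention $M^x = x^{-1}Mx$ and note that $g \in M^x$ precisely when $xgx^{-1} \in M$. Since $M^x = M^{x_0}$ holds exactly when $x x_0^{-1} \in N_G(M)$, the fibres of the map $x \mapsto M^x$ are right cosets of $N_G(M)$; moreover each such coset lies entirely inside the set $\{x \in G : xgx^{-1} \in M\}$ as soon as one of its elements does, because $N_G(M)$ preserves $M$. Hence, writing $k$ for the number of conjugates of $M$ containing $g$, one gets
$$k = \frac{|\{x \in G : xgx^{-1} \in M\}|}{|N_G(M)|}.$$

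Next I would evaluate the numerator. Let $\mathcal{C} = g^G$ be the conjugacy class of $g$. Every element $xgx^{-1}$ lies in $\mathcal{C}$, so $\{x : xgx^{-1} \in M\}$ is the disjoint union, over $m \in M \cap \mathcal{C}$, of the sets $\{x : xgx^{-1} = m\}$, and each of these is a coset of $C_G(g)$, of size $|C_G(g)|$. This yields the clean identity
$$k = \frac{|M \cap \mathcal{C}| \cdot |C_G(g)|}{|N_G(M)|}.$$
Because $M \cap \mathcal{C} \subseteq M \subseteq N_G(M)$, we have $|M \cap \mathcal{C}| \leq |N_G(M)|$, and therefore $k \leq |C_G(g)|$, a bound that is completely independent of the choice of $M$.

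It then remains to bound $|C_G(g)|$ for the two allowed cycle types, using the standard formula $|C_G(g)| = \prod_i i^{c_i} c_i!$ where $c_i$ counts the $i$-cycles of $g$. I would check the three cases: if $g$ is an $n$-cycle then $|C_G(g)| = n$; if $g$ is an $(s,n-s)$-cycle with $s < n/2$ then $|C_G(g)| = s(n-s) < n^2/4$; and in the remaining case of two equal cycles of length $n/2$ (that is, $s = n/2$), the extra factor $2!$ from interchanging the two cycles gives $|C_G(g)| = 2(n/2)^2 = n^2/2$. In every case $|C_G(g)| \leq n^2/2 \leq n^2$, so $k \leq n^2$, as required.

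There is no genuine obstacle here: the whole thing is a routine transitive-action count requiring no deep input. The only points needing care are getting the normalizer bookkeeping right, so that the denominator $|N_G(M)|$ absorbs the crude estimate $|M \cap \mathcal{C}| \leq |M|$, and not overlooking the extra symmetry factor in the centralizer when the two cycles have equal length $n/2$. This last case is precisely the one where the estimate is tightest, producing $n^2/2$ rather than a smaller quantity, and it is what forces the bound to be stated as $n^2$ rather than something linear.
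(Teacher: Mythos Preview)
Your argument is correct. The identity $k = |M \cap g^G|\cdot |C_G(g)|/|N_G(M)|$ followed by the trivial bound $|M \cap g^G| \leq |N_G(M)|$ gives $k \leq |C_G(g)|$, and the centralizer orders you compute are right, including the factor $2$ when $s = n/2$.

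The paper does not supply its own proof of this lemma; it simply quotes it as Lemma~4 of Blackburn~\cite{B}. Your route is exactly the standard one (and, in substance, the one Blackburn uses): reduce to $k \leq |C_G(g)|$ via the orbit-counting identity for the conjugation action, then read off the centralizer order from the cycle type. There is nothing to compare beyond noting that your write-up makes the normalizer bookkeeping explicit, which is fine.
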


The following lemma is a consequence of \cite[Theorem 3]{B}.

\begin{lemma} \label{bimpr}
Let $d \geq 2$, $m \geq 5$ be integers such that $n = dm$. Then $$|S_d \wr S_m| = d!^m m! \leq (n/5e)^{n} \cdot n^{O(1)}.$$
\end{lemma}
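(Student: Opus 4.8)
The plan is to give a self-contained proof via Stirling's estimates (the statement follows from Theorem 3 of \cite{B}, but the computation is short). The key structural fact is that $m \ge 5$ together with $dm = n$ forces $d = n/m \le n/5$, and this is exactly what produces the constant $5e$. First I would feed the upper bounds of Lemma \ref{Stirling} into both factorials and collect the powers of $e$, using $(d/e)^{dm}=(d/e)^n$, to get
$$d!^m m! \le \bigl(e\sqrt d\,(d/e)^d\bigr)^m\cdot e\sqrt m\,(m/e)^m = e\cdot d^{m/2}\,\sqrt m\cdot m^m\cdot (d/e)^n.$$

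Next I would factor out the target bound. Dividing by $(n/5e)^n$ and using $n/d=m$, the ratio $(d/e)^n/(n/5e)^n$ equals $(5d/n)^n=(5/m)^n$, and $m^m(5/m)^n = 5^n m^{m-n}=5^n m^{-m(d-1)}$, so that
$$\frac{d!^m m!}{(n/5e)^n} \le e\sqrt m\cdot d^{m/2}\cdot 5^n\cdot m^{-m(d-1)}.$$
It therefore suffices to show the right-hand side is $n^{O(1)}$. Taking logarithms and simplifying (using $\tfrac m2\log d + m\log m = \tfrac m2\log(mn)$), this amounts to the single inequality
$$n\log(m/5) \ge \tfrac{m}{2}\log(mn) - O(\log n),$$
to be verified for every admissible $m$, i.e.\ every divisor $m$ of $n$ with $5 \le m \le n/2$.

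The final step is to check this inequality. When $m=5$ the left-hand side is $0$ while the right-hand side is $\tfrac52\log(5n) - O(\log n)$, so the bound holds and, crucially, the discrepancy is only polynomial in $n$: here $d^{m/2}=(n/5)^{5/2}$ is genuinely $n^{O(1)}$, and this boundary case is tight, reproducing the constant $5e$ exactly. For $m \ge 6$ it suffices to prove $n\log(m/5)\ge m\log n$ (since $mn\le n^2$ gives $\tfrac m2\log(mn)\le m\log n$), equivalently $\log(m/5)/m \ge \log n / n$. Here I would use that $m\mapsto \log(m/5)/m$ is unimodal — its derivative has the sign of $1-\log(m/5)$, so it increases for $m<5e$ and decreases for $m>5e$ — whence its minimum over $[6,n/2]$ is attained at an endpoint; a direct check at $m=6$ (where the left-hand side is a positive constant) and at $m=n/2$ (where it is $\tfrac{2}{n}\log(n/10)$) shows both exceed $\log n/n$ once $n$ is large.

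I expect the main obstacle to be the tight boundary case $m=5$, where there is no exponential slack: one must confirm that the leftover factor is only polynomial (the term $d^{m/2}$ with $m=5$) and that the constant $5$ — and no smaller one — is forced precisely by $d\le n/5$. The range $m\ge 6$ is comparatively routine once the unimodality of $\log(m/5)/m$ is in hand. Alternatively, one may simply invoke Theorem 3 of \cite{B}, which bounds the orders of imprimitive maximal subgroups and yields the stated inequality directly.
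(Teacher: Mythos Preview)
Your argument is correct. The Stirling bound, the cancellation of $e^m$, the reduction to $n\log(m/5)\ge\tfrac{m}{2}\log(mn)-O(\log n)$, and the endpoint analysis of $\log(m/5)/m$ on $[6,n/2]$ all check out; in particular the boundary case $m=5$ really does leave only the polynomial factor $(n/5)^{5/2}$, confirming that the constant $5e$ is sharp for this method.

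The paper, however, gives no argument at all: it simply records the lemma as ``a consequence of \cite[Theorem~3]{B}'' and moves on. So your approach differs from the paper's in that you supply a self-contained Stirling computation rather than quoting Blackburn. What you gain is independence from the external reference and an explicit identification of where the constant $5e$ comes from (namely $d\le n/5$); what the paper gains is brevity. Your closing sentence already acknowledges the citation route, so you have in effect given both proofs.
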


From now on $i$ will be $1$ or $2$.

Let $\mathscr{H}^{(i)}$ be the family of all maximal subgroups of $G$ outside $\mathscr{M}^{(i)}$. Write $\mathscr{H}^{(i)} = \bigcup_{j=1}^5 \mathscr{H}_j$ where $\mathscr{H}_1$ is the family of intransitive maximal subgroups of $G$ outside $\mathscr{M}^{(i)}$, $\mathscr{H}_2$ is the family of primitive maximal subgroups of $G$, $\mathscr{H}_j$ is the family of imprimitive maximal subgroups of $G$ with $j$ blocks for $j \in \{3,4\}$ and $\mathscr{H}_5$ is the family of imprimitive maximal subgroups of $G$ with at least $5$ blocks. Let $J:=\{1,2,3,4,5\}$. For $j \in J$ and $v=\{\Delta_1,\Delta_2\} \in V(\Gamma^{(i)})$, let $E_v^j$ be the event ``$g_{\Delta_1}, g_{\Delta_2}$ both belong to some $H \in \mathscr{H}_j$'', so that $P(E_v) \leq \sum_{j \in J} P(E_v^j)$. 

We will prove that 
$$ \sum_{j \in J} P(E_v^j) \leq 
\frac{1}{2^{n+3}},$$
which for sufficiently large $n$ is smaller than 
$\frac{1}{e(d+1)}$.

Let $[H]$ denote the 
$G$-conjugacy class of a subgroup $H$ of $G$. 
Let $m_{\Delta}([H])$ be the number of different 
conjugates of $H$ that contain a fixed element 
of $ C(\Delta)$. By Lemma \ref{mdelta}, $m_{\Delta}([H]) \leq n^2$ always.

If $H \in 
\mathscr{M}^{(i)}$ then at least one of 
$f_{\Delta_1}(H)$ and $f_{\Delta_2}(H)$ is $0$ for $\Delta_{1} \not= \Delta_{2}$. 
Therefore in the computation of $P(E_v)$ we 
restrict our attention to the maximal subgroups 
of $G$ outside $\mathscr{M}^{(i)}$. 

In the following sum we let $[H]$ vary in the set of conjugacy 
classes of elements of $\mathscr{H}_j$ with $j \in J$. We have
\begin{align*}
P(E_v^j) & \leq \sum_{[H]} \sum_{K \in [H]} 
f_{\Delta_1}(K) \cdot f_{\Delta_2}(K) \\
& = \sum_{[H]} \sum_{K \in 
[H]}\frac{\abs{C(\Delta_1)
\cap K}}{\abs{C(\Delta_1)}} f_{\Delta_2}(K) \\
& = \sum_{[H]} \sum_{K \in [H]}\sum_{g\in 
C(\Delta_1)\cap K}
\frac{1}{\abs{C(\Delta_1)}}f_{\Delta_2}(K) \\
& = \sum_{[H]} \sum_{g \in C(\Delta_1)} 
\frac{1}{\abs{C(\Delta_1)}}\sum_{K \in [H] \atop 
g\in K}f_{\Delta_2}(K) \\
& \leq \sum_{[H]} \sum_{g \in 
C(\Delta_1)}\frac{1}{\abs{C(\Delta_1)}} \cdot 
m_{\Delta_1}([H]) \cdot \max_{K \in [H]} 
f_{\Delta_2}(K) \\
& = \sum_{[H]} m_{\Delta_1}([H])\cdot \max_{K \in 
[H]} f_{\Delta_2}(K).
\end{align*}

For $v = \{ \Delta_{1}, \Delta_{2} \}$ let $c_{v,j}$ be the number of conjugacy classes 
of subgroups in $\mathscr{H}_j$ such that there 
exists $H$ in such a class such that $H \cap 
C(\Delta_1) \neq \varnothing$ and $H \cap 
C(\Delta_2) \neq \varnothing$. We deduce that
\begin{align} \label{inequality_argument_1}
P(E_v^j) & \leq c_{v,j} \cdot 
\min_{\{i_1,i_2\}=\{1,2\}} \left( \max_{H \in 
\mathscr{H}_j \atop K \in [H]} \left( 
m_{\Delta_{i_1}}([H])\cdot f_{\Delta_{i_2}}(K) 
\right) \right).
\end{align}

For $v = \{ \Delta_{1}, \Delta_{2} \}$ denote by $s_{v,j}$ the number of members of 
$\mathscr{H}_j$ intersecting both $C(\Delta_1)$ 
and $C(\Delta_2)$ non-trivially. Then
\begin{align} \label{inequality_argument_2}
P(E_v^j) & \leq \sum_{H \in \mathscr{H}_j} 
f_{\Delta_1}(H) \cdot f_{\Delta_2}(H) \leq s_{v,j}
\cdot \max_{H \in \mathscr{H}_j} \left( 
f_{\Delta_1}(H) \cdot f_{\Delta_2}(H) \right).
\end{align}

We will use inequality 
(\ref{inequality_argument_1}) if $j \neq 4$ or $(|\Delta_1|,|\Delta_2|) \neq (n/2,n/2)$ and 
we will use inequality
(\ref{inequality_argument_2}) if $j=4$ and $(|\Delta_1|,|\Delta_2|) = (n/2,n/2)$.

\begin{lemma}
\label{cs}
Let $v=\{\Delta_1,\Delta_2\}$ be a vertex of 
$\Gamma^{(i)}$ and let $j \in J$. Then 
$c_{v,2} \leq n$ (for $n$ large enough) and 
$c_{v,j} \leq 1$ for $j \in \{3,4\}$. 
Moreover $c_{v,5} \leq 2 \sqrt{n}$. If $(|\Delta_1|,|\Delta_2|)=(n/2,n/2)$ then $s_{v,4} \leq 1$.
\end{lemma}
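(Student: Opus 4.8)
The plan is to handle the five estimates separately, in each case reducing the quantity to a count of conjugacy classes of maximal subgroups of a prescribed O'Nan--Scott type that can simultaneously meet $C(\Delta_1)$ and $C(\Delta_2)$. Three of the bounds are purely combinatorial. For $j\in\{3,4\}$ the family $\mathscr{H}_j$ consists of the stabilizers of the partitions of $\Omega$ into $j$ equal blocks of size $n/j$ (and is empty unless $j\mid n$); since $S_n$ is transitive on the set of such partitions, $\mathscr{H}_j$ is a single conjugacy class, so $c_{v,j}\le 1$ with no further work. For $j=5$ the classes in $\mathscr{H}_5$ are indexed by the divisors $m$ of $n$ with $5\le m\le n/2$, one class $S_{n/m}\wr S_m$ for each; hence $c_{v,5}$ is at most the number of divisors of $n$, which is at most $2\sqrt{n}$ by the pairing $m\leftrightarrow n/m$.

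For $s_{v,4}$, where $|\Delta_1|=|\Delta_2|=n/2$, I would argue via Lemma~\ref{l:imprimitive_1}(2): the sets $C(\Delta_t)$ consist of $n$-cycles, and a member $W\in\mathscr{H}_4$ meets $C(\Delta_t)$ exactly when $\Delta_t$ is a union of two of the four blocks of $W$. If both $\Delta_1$ and $\Delta_2$ are unions of blocks of a common $W$, then each block lies inside a single region of the Venn diagram of $(\Delta_1,\Delta_2)$; setting $t=|\Delta_1\setminus\Delta_2|$, these four regions have sizes $n/2-t,\,t,\,t,\,n/2-t$ and each must be a union of blocks of size $n/4$, forcing $t\in\{0,n/4,n/2\}$. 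Because $\Delta_1,\Delta_2\in\mathscr{S}^{(1)}$ both contain the point $1$ we have $\Delta_1\cap\Delta_2\neq\varnothing$, which excludes $t=n/2$, while $\Delta_1\neq\Delta_2$ excludes $t=0$; thus $t=n/4$, the four regions are exactly the four blocks, and $W$ is uniquely determined. Hence $s_{v,4}\le 1$.

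The heart of the lemma, and the only step using the classification, is $c_{v,2}\le n$. Any class counted by $c_{v,2}$ contains a primitive maximal subgroup $H\neq A_n$ meeting both $C(\Delta_1)$ and $C(\Delta_2)$, so $H$ contains an $n$-cycle or a permutation with exactly two (odd) cycles. I would invoke the CFSG-based classification of primitive groups containing such an element. For an $n$-cycle and $n$ even the affine and sporadic possibilities are excluded, leaving $H$ of projective type $\PGL_d(q)\le H\le \PGammaL_d(q)$ with $n=(q^d-1)/(q-1)$; as $d\le\log_2(n+1)$ and $q$ is essentially determined by $(n,d)$, this yields only $O(\log n)$ classes. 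For a two-cycle element $(a,b)$, when $\gcd(a,b)$ is small one passes to a suitable power to obtain a single long cycle fixing few points and applies Jones's classification of primitive groups containing a cycle, again leaving only $O(\log n)$ projective or affine classes once $A_n$ is discarded. Summing over the at most two relevant cycle shapes gives $c_{v,2}=O(\log n)\le n$ for large $n$. I expect the genuine obstacle to be the two-cycle case with $\gcd(a,b)=\gcd(a,n)$ large, where no power of the element is a single cycle and one must appeal directly to the classification of primitive groups possessing an element with only two orbits; all of the CFSG content, and essentially all of the difficulty, of Lemma~\ref{cs} is concentrated in this single generous bound $c_{v,2}\le n$, the remaining four estimates being elementary.
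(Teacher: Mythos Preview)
Your treatment of $j\in\{3,4,5\}$ and of $s_{v,4}$ is correct and essentially identical to the paper's: single conjugacy class for $j=3,4$, divisor count for $j=5$, and the Venn-diagram argument forcing the four blocks of $W$ to be $\Delta_1\cap\Delta_2$, $\Delta_1\setminus\Delta_2$, $\Delta_2\setminus\Delta_1$, $\Omega\setminus(\Delta_1\cup\Delta_2)$.

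For $c_{v,2}$ the paper takes a much shorter and different route. It does not use the cycle structure of the elements of $C(\Delta_1)$ or $C(\Delta_2)$ at all; it simply cites the Liebeck--Shalev result \cite{LS} that the total number of conjugacy classes of primitive maximal subgroups of $S_n$ is at most $n$ for $n$ large, and this already bounds $c_{v,2}$. Your plan---classifying primitive groups containing an $n$-cycle or a two-cycle element and aiming for $O(\log n)$---would give a sharper bound if carried through, but as you yourself flag, the two-cycle case with $\gcd(a,b)$ large is not handled: no power of such an element is a single long cycle, and you defer to an unspecified ``classification of primitive groups possessing an element with only two orbits'' without citing or proving one. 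So your argument for $c_{v,2}$ has a genuine gap at precisely the point you identify, whereas the paper's one-line citation avoids the case analysis entirely. Both routes ultimately rest on CFSG, but the paper's is complete as stated and yours is not.
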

\begin{proof}
By \cite{LS}, $c_{v,2}
\leq n$ for $n$ large enough. We remark that this is the only point where we use CFSG. $c_{v,5}$ 
is at most the number of positive 
divisors of $n$ less than
$n$, and this is at most $2 \sqrt{n}$. If 
$j \in \{3,4\}$, then 
$\mathscr{H}_j$ is a single conjugacy class of 
subgroups of $G$, therefore $c_{v,j} \leq 1$.

Assume now that $(|\Delta_1|,|\Delta_2|)=(n/2,n/2)$, so that both $C(\Delta_1)$ and 
$C(\Delta_2)$ consist of $n$-cycles. We will prove that $s_{v,4} \leq 1$. Let $H$ 
be an imprimitive maximal subgroup of $G$ 
isomorphic to $S_{n/4} \wr S_4$. The two sets $H 
\cap C(\Delta_1)$ and $H \cap C(\Delta_2)$ are 
both non-empty only if the four imprimitivity 
blocks of $H$ are exactly: $\Delta_1 \cap 
\Delta_2$, $\Delta_1-\Delta_2$, 
$\Delta_2-\Delta_1$ and $\Omega-(\Delta_1 \cup 
\Delta_2)$, in which case $|\Delta_1 \cap 
\Delta_2| = n/4$. This implies the result.
\end{proof}

We will bound each $P(E_v^j)$. 
Note that $P(E_v^1)=0$ in all cases, since no intransitive subgroup contains $n$-cycles and the only intransitive maximal subgroups containing elements of cycle type $(a,b)$ with $a,b$ odd are the ones belonging to $\mathscr{M}^{(2)}$.

\begin{enumerate}
\item[(i)] $j=2$. By a result of Praeger and Saxl \cite{PS}, the order of any member of $\mathscr{H}_2$ is at most $4^n$. By Lemmas \ref{mdelta}, \ref{cs}, inequalities (\ref{ll1}) and (\ref{inequality_argument_1}), 
\begin{align*}
P(E_v^2) & \leq n^{3} \cdot \max_{H \in 
\mathscr{H}_2 \atop K \in [H]} f_{\Delta_2}(K) \leq n^3 \cdot \frac{4^n}{(2/n)^2 (n/2)!^2}.
\end{align*}

\item[(ii)] $j=3$. By Lemmas \ref{l:imprimitive_1}, \ref{mdelta}, \ref{cs} and inequality (\ref{inequality_argument_1}), 
$$P(E_v^3) \leq n^{2} \cdot \max_{ H \in 
\mathscr{H}_3 \atop K \in [H] } f_{\Delta_2}(K) \leq (1/3)^n \cdot n^{O(1)}.$$

\item[(iii)] $j=4$. If $(|\Delta_1|,|\Delta_2|) \neq (n/2,n/2)$ then, without loss of generality, we may assume $|\Delta_2| \neq n/2$. In this case, by Lemmas \ref{l:imprimitive_1}, \ref{mdelta}, \ref{cs} and inequality (\ref{inequality_argument_1}), 
$$P(E_v^4) \leq  n^{2} \cdot \max_{ H \in 
\mathscr{H}_4 \atop K \in [H] } f_{\Delta_2}(K)   \leq (1/3)^{3n/4} \cdot n^{O(1)}.$$
Assume now that $(|\Delta_1|,|\Delta_2|)=(n/2,n/2)$. Since $s_{v,4} \leq 1$ by Lemma \ref{cs}, we have
$$P(E_v^4) \leq \max_{ H \in 
\mathscr{H}_4}  (f_{\Delta_1}(H) \cdot f_{\Delta_2}(H)) \leq (1/4)^n \cdot n^{O(1)},$$ by inequality 
(\ref{inequality_argument_2}) and Lemma \ref{l:imprimitive_1}.

\item[(iv)] $j=5$. Fix $H \in \mathscr{H}_5$.  
Then, by Lemma \ref{bimpr}, inequality (\ref{ll1}) and Lemma \ref{Stirling},
\begin{align*}
f_{\Delta}(H) & \leq \frac{|H|}{|C(\Delta)|} \leq  \frac{(n/5e)^n \cdot n^{O(1)}}{(n/2)!^2} \leq \frac{(n/5e)^n \cdot n^{O(1)}}{(n/2e)^n} = (2/5)^n \cdot n^{O(1)}.
\end{align*}
The set $\mathscr{H}_5$ contains at most $2 \sqrt{n}$ 
classes of subgroups. For every $H \in \mathscr{H}_{5}$, we have $m_{\Delta_1}([H]) \leq n^2$ by Lemma \ref{mdelta}, hence
$$P(E_v^5) \leq 2 \sqrt{n} \cdot n^{2}
\cdot \max_{  H \in 
\mathscr{H}_5 \atop K \in [H]   } f_{\Delta_2}(K) \leq (2/5)^n \cdot n^{O(1)},$$ 
by inequality (\ref{inequality_argument_1}).

\end{enumerate}

Combining everything, we deduce that $$P(E_v) \leq \sum_{j \in J} P(E_v^j) \leq (1/3)^{3n/4} \cdot n^{O(1)},$$
which is smaller than $2^{-n-3}$ for every large enough $n$.

\section{Proof of Proposition \ref{CFSG}}

Eberhard and Virchow \cite[Theorem 1.1]{EV} proved, without CFSG, that for every $\epsilon > 0$ the probability $p(n)$ that a random pair of elements from $S_n$ generates $S_n$ or $A_n$ is $$1 - \frac{1}{n} + \Theta(n^{-2+\epsilon}),$$ for every sufficiently large $n$. The same asymptotic formula holds \cite[Theorem 1.2]{EV} for the probability $a(n)$ that a random pair of elements from $A_n$ generates $A_n$. Let $b(n)$ be the probability that a random pair of elements from $S_{n} \setminus A_n$ generates $S_n$. Let $c(n)$ be the probability that a random element from $A_n$ and a random element from $S_{n} \setminus A_{n}$ generate $S_n$. Observe that $b(n) = c(n)$ since $\langle x, y \rangle = \langle x y^{-1}, y \rangle$ where $x$ and $y$ are in $S_{n} \setminus A_n$. Since $$p(n) = \frac{a(n) + b(n) + 2 c(n)}{4},$$ it follows that $b(n) = (4p(n) - a(n))/3$.
Fix $\epsilon > 0$. We have universal positive constants $c_{1}$ and $c_{2}$ by \cite{EV} such that $1 - n^{-1} - c_{1}n^{-2+\epsilon}$ is smaller than both $p(n)$ and $a(n)$ and $a(n) < 1 - n^{-1} + c_{2}n^{-2+\epsilon}$ . Thus $1 - n^{-1} - (1/3)(4c_{1}+c_{2}) n^{-2 + \epsilon} < b(n)$.

Following Liebeck and Shalev \cite{LS96}, define graphs $A(n)$ and $B(n)$ with vertex sets $A_n$ and $S_{n} \setminus A_{n}$ respectively such that there is an edge between vertices $x$ and $y$ if and only if $x$ and $y$ generate $A_n$ in the first case and $S_n$ in the second case. The largest size of a complete subgraph in $A(n)$ is $\omega(A_{n})$ and the largest size of a complete subgraph in $B(n)$ is at most $\omega(S_{n})$.

Tur\'an's \cite{Turan} theorem states that a simple graph on $m$ vertices which does not contain a complete subgraph of size $r+1$ has at most $(1 - \frac{1}{r})\frac{m^{2}}{2}$ edges. We apply this theorem to the graphs $A(n)$ and $B(n)$ with $m = n!/2$ vertices. Consider the graph $A(n)$. (The argument for the case of $B(n)$ is the same.) Let $r:=\omega(A_n)$. Since $A_{n}$ is not a cyclic group, observe that $A(n)$ has more than $(1 - \frac{1}{n} - c_{1}n^{-2+\epsilon})\frac{m^{2}}{2}$ edges. It follows that $$\Big(1 - \frac{1}{n} - c_{1}n^{-2+\epsilon}\Big)\frac{m^{2}}{2} < \Big(1 - \frac{1}{r}\Big)\frac{m^{2}}{2},$$ giving $r > n - c_{1} n^{\epsilon} = (1-o(1))n$. 

\section{Acknowledgements}

We would like to thank the referees for helpful comments, in particular, for improving the statement and proof of Proposition \ref{CFSG}.

\end{document}